\newtheorem{thm}{Theorem}[section]
\newtheorem{lem}[thm]{Lemma}
\newtheorem{cor}[thm]{Corollary}
\theoremstyle{definition}
\newcolumntype{M}[1]{>{\centering\arraybackslash}m{#1}} 
\newenvironment{proofbold}[1]{\noindent\textbf{Proof of {#1}.}}{\hfill$\square$}
\newcommand{\ve}{\varepsilon}
\title{An $L^2$-bound for the Barban--Vehov weights%
}
\author{Olivier Ramar\'e}
\author{Sebastian Zuniga-Alterman}
\thanks{$^*$ The second author has been supported by the Finnish Centre of Excellence in Randomness
and Structures grant 346307 of the Academy of Finland,}
\newcommand{\Addresses}{{
  {\footnotesize
\ \\
  O.~RAMAR\'E - \textsc{CNRS / Institut de Math\'ematiques de Marseille, Aix Marseille Universit\'e,
U.M.R. 7373, Campus de Luminy, Case 907, 13288 MARSEILLE Cedex 9, France.}\\
  \texttt{olivier.ramare@univ-amu.fr}

\ \\
  S.~ZUNIGA ALTERMAN - \textsc{Department of Mathematics and Statistics, University of Turku, 20014 TURKU, Finland.}\\
  \texttt{szualt@utu.fi}

}}}
\begin{document}



\maketitle

\begin{abstract}
  Let $\lambda$ the Barban--Vehov weights, defined in \eqref{delBVweights}. Let $X\ge z_1\ge100$ and  $z_2=z_1^\tau$ for some $\tau>1$. We prove that
    \begin{equation*}
    \sum_{n\le
      X}\frac{1}{n}\Bigl(\sum_{\substack{d|n}}\lambda_d\Bigr)^2
    \le
    f(\tau)\frac{\log X}{\log (z_2/z_1)},
  \end{equation*}
   for a completely determined function $f:(1,\infty)\to\mathbb{R}_{>0}$.
  In particular, we may take $f(2)=30$, saving more than a factor of $5$ on what was the best known result for $\tau=2$. Two related estimates are also provided for general $\tau>1$.
 
\end{abstract}

\section{Introduction and results}

In \cite{Barban-Vehov*68}, M.~Barban and P.~Vehov introduced the weights
\begin{equation}
  \label{delBVweights}
  \lambda_d = \lambda_d(z_2,z_1) =\mu(d)\frac{\log^+(z_2/d)-\log^+(z_1/d)}{\log(z_2/z_1)},
\end{equation}
defined for fixed $z_2>z_1>0$, where $\log^+=\max\{\log,0\}$. They discovered that the $L^2$-mean of $\sum_{d|n}\lambda_d$ is
remarkably small, even when $n$ is smaller than $z_2^2$. 

The Barban--Vehov weights were further studied for instance by
S.~Graham \cite{Graham*78}, Y.~Motohashi \cite[$\S$
1.3]{Motohashi*83} and M.~Haye Betah \cite{Betah*18}.

In particular, Y.~Motohashi found that, in order to use such weights
to obtain log-free density estimates, it is enough to upper bound
their harmonic $L^2$-average and that, in fact, it is sufficient to
bound
$\sum_{n\ge 1}\bigl(\sum_{\substack{d|n}}\lambda_d\bigr)^2/n^{1+\ve}$,
for any $\ve>0$. Here is our first result.
\begin{thm}
  \label{main1}
  Let $\ve>0$ and $z_1\ge 100$. Consider $z_2=z_1^\tau$ for some $\tau>1$. Then, 
  \begin{equation*}
    \sum_{n\geq 1}\frac{1}{n^{1+\ve}}\biggl(\sum_{\substack{d|n}}\lambda_d\biggr)^2    \le
    \frac{e^{\gamma\ve}}{\ve\log(z_2/z_1)}
    \frac{A(\tau+1)+[B-\tfrac{C }{z_1^{\ve}}+(B-\tfrac{C}{
      z_2^{\ve}}) \tau^2]\ve\log z_1}
    {\tau-1},
  \end{equation*}
  where $A=1.084$, $B=1.301$, $C=0.318$.
\end{thm}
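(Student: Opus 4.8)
The plan is to reduce the harmonic $L^2$-average to a Selberg-type quadratic form and then diagonalise it. Writing $s=1+\ve$ and $L=\log(z_2/z_1)=(\tau-1)\log z_1$, I would expand the square and interchange the order of summation; the inner sum over $n$ divisible by $\operatorname{lcm}(d_1,d_2)$ produces a factor $\zeta(s)$, giving
$$
\sum_{n\ge1}\frac{1}{n^{s}}\Bigl(\sum_{d\mid n}\lambda_d\Bigr)^2=\zeta(s)\,Q,\qquad Q=\sum_{d_1,d_2}\frac{\lambda_{d_1}\lambda_{d_2}}{[d_1,d_2]^{s}} .
$$
The prefactor $e^{\gamma\ve}/\ve$ in the statement is a convenient upper bound for $\zeta(1+\ve)$, so the whole game is to show $Q\le L^{-1}(\tau-1)^{-1}\bigl(A(\tau+1)+[\cdots]\ve\log z_1\bigr)$.

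For $Q$ I would use the Selberg substitution. Write $\lambda_d=\mu(d)g(d)$ with $g(d)=\bigl(\log^+(z_2/d)-\log^+(z_1/d)\bigr)/L$, and use $[d_1,d_2]^{-s}=(d_1,d_2)^s(d_1d_2)^{-s}$ together with $(d_1,d_2)^s=\sum_{e\mid(d_1,d_2)}h(e)$, where $h$ is multiplicative with $h(p)=p^{s}-1$. This diagonalises
$$
Q=\sum_{e}h(e)\,y_e^{2},\qquad y_e=\sum_{e\mid d}\frac{\mu(d)g(d)}{d^{s}} .
$$
To handle $y_e$ I would feed in the integral representation $g(d)=L^{-1}\int_{z_1}^{z_2}\mathbf 1_{d\le t}\,dt/t$, which, after writing $d=em$ with $(m,e)=1$, turns $y_e$ into an average over $t$ of the truncated, coprimality-restricted Möbius sums $\sum_{m\le t/e,\,(m,e)=1}\mu(m)m^{-s}$.

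The technical heart, and the main obstacle, is explicit control of these truncated sums with the cancellation of $\mu$ retained. The reason cancellation is unavoidable is visible already in the undiagonalised form: for squarefree $d_1>1$ one has $\sum_{d_2}\mu(d_2)/[d_1,d_2]^s=0$ (the local factor at each $p\mid d_1$ is $1-1=0$), so an untruncated variable contributes nothing and $Q$ is governed purely by the doubly-truncated defect. Consequently the naive ``main term'' $1/\zeta(s)$ is of strictly lower order than the truth — it tends to $0$ as $\ve\to0$, whereas $Q\sim c(\tau)/L$ — and a term-by-term absolute-value bound inside the $t$-integral overshoots by a factor $\sim L$. To capture the right order I would instead bound the partial sums via the classical inequality $\bigl|\sum_{m\le x}\mu(m)/m\bigr|\le1$ and its coprime analogue, fed through partial summation using $m^{-s}=m^{-1}m^{-\ve}$, obtaining control of both the sums and their tails by an explicit constant times $x^{-\ve}$, and crucially keeping the oscillation across the integration in $t$.

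Finally I would split the $e$-summation into the ranges $e\le z_1$ (where the smoothing is complete and the estimate is cleanest) and $z_1<e\le z_2$ (where $g(em)$ is genuinely tapering and should supply the $\tau^2$ and $z_2^{-\ve}$ terms), recombine the two via the multiplicativity of $h(e)/e^{2s}$ into convergent Euler products, and evaluate or bound those products explicitly. Collecting the pieces, optimising the constants produced by the truncated-Möbius bounds and by $\zeta(1+\ve)\le e^{\gamma\ve}/\ve$, should yield the stated shape with numerical $A=1.084$, $B=1.301$, $C=0.318$. I expect essentially all of the work — and all of the numerical sharpness — to reside in the coprime truncated-Möbius estimate and in the subsequent constant-chasing through the Euler products.
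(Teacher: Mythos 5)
Your first two steps coincide with the paper's: expanding the square to get $\zeta(\sigma)\sum_{d,e}\lambda_d\lambda_e/[d,e]^{\sigma}$ and diagonalising via $(d,e)^{\sigma}=\sum_{\delta|(d,e)}\varphi_{\sigma}(\delta)$ are exactly \eqref{eq_0}--\eqref{eq:13}, and your integral representation of $g$ is a correct rewriting of the diagonal coefficients $y_e$. The gap is at what you yourself call the technical heart, and as described it is fatal. From $\bigl|\sum_{m\le x,(m,e)=1}\mu(m)/m\bigr|\le1$ and partial summation you do get $\sum_{m\le x,(m,e)=1}\mu(m)m^{-\sigma}=\prod_{p\nmid e}(1-p^{-\sigma})+O(x^{-\ve})$; but feeding this into the $t$-integral, whether with absolute values or with Cauchy--Schwarz in $t$, bounds $|y_e|$ by roughly $e^{-\sigma}\cdot O\bigl((e/z_1)^{\ve}\bigr)$, which in the critical regime $\ve\asymp1/\log z_1$ is $O(e^{-\sigma})$ with no gain; hence $Q\lesssim\sum_{e\le z_2}\mu^2(e)\varphi_{\sigma}(e)e^{-2\sigma}\asymp\log z_2$, whereas the theorem forces $Q\lesssim c(\tau)/\log(z_2/z_1)$. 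You are off by a factor $\asymp\log^2 z_1$, and the range-splitting at $z_1$ and the Euler products do not repair this, since the loss is already present in each individual $y_e$. Your phrase ``keeping the oscillation across the integration in $t$'' names this difficulty but is not a mechanism: if you do keep it, i.e.\ exchange the $t$-integral with the sum over $m$, you land exactly on the log-smoothed sums $\check{m}_e(X;\sigma)$ of \eqref{defcheckmqdeD}, and the whole problem becomes proving that $\check{m}_e(X;\sigma)=O\bigl(\tfrac{e}{\varphi(e)}\bigr)$ uniformly with an explicit constant --- not the $O(\log X)$ (or $\log X(1+\tfrac{\ve}{2}\log X)$) which is all that partial summation against a bounded remainder can ever yield.

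That estimate is precisely the external input the paper rests on, Lemma~\ref{checkmqdx} (Ramar\'e--Zuniga): $0\le\check{m}_q(X,\sigma)\le1.00303\tfrac{q}{\varphi(q)}\bigl(1+(\sigma-1)\log X\bigr)$, an identity-based result that your toolkit does not supply. The paper then uses it twice in ways your outline has no substitute for. First, the \emph{nonnegativity} gives the decoupling $\bigl(\check{m}_{\delta}(z_2/\delta;\sigma)-\check{m}_{\delta}(z_1/\delta;\sigma)\bigr)^2\le\check{m}_{\delta}(z_2/\delta;\sigma)^2+\check{m}_{\delta}(z_1/\delta;\sigma)^2$, reducing everything to the one-scale sums $U(z_1)$, $U(z_2)$. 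Second, inside Lemma~\ref{BVRam} the upper bound is applied to only \emph{one} factor $\check{m}_{\delta}$, the other being kept oscillating so that a further M\"obius inversion can be performed, after which the Mertens-type lower bound of Lemma~\ref{mertensaux} produces the negative term $-C/Y^{\ve}$. This is where the constants $A=1.084$, $B=1.301$, $C=0.318$ of the statement actually come from; a plan that must reproduce these exact values has no way of generating them without that machinery. In short: right skeleton, correct diagnosis of where the difficulty sits, but the single estimate on which the entire proof rests is missing, and the tools proposed in its place are demonstrably too weak by a power of $\log z_1$.
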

The next is a result that enables one to compare Theorem \ref{main1} to previous work.
\begin{cor}
  \label{main2}
  When $X\ge z_1 \ge 100$ and $z_2=z_1^\tau$, for some $\tau>1$, we have
  \begin{equation*}
    \sum_{n\leq X}\frac{1}{n}\biggl(\sum_{\substack{d|n}}\lambda_d\biggr)^2
    \le
    3.09\frac{\log X}{\log(z_2/z_1)}\frac{1.084(\tau+1)+1.301(1+\tau^2)-0.116}
    {\tau-1}.
  \end{equation*}
  In particular, when $\tau=2$, the right hand side bound reads $30\log X/\log z_1$.
\end{cor}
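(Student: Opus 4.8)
The plan is to reduce the finite harmonic average to the infinite one already controlled by Theorem~\ref{main1} via a Rankin-style truncation, and then to optimize the free parameter $\ve$. Since every summand is nonnegative and $1/n=n^{\ve}/n^{1+\ve}\le X^{\ve}/n^{1+\ve}$ for $n\le X$ and any $\ve>0$, I would write
\[
\sum_{n\le X}\frac1n\Bigl(\sum_{d\mid n}\lambda_d\Bigr)^2\le X^{\ve}\sum_{n\ge1}\frac1{n^{1+\ve}}\Bigl(\sum_{d\mid n}\lambda_d\Bigr)^2
\]
and feed the right-hand side straight into Theorem~\ref{main1}. Everything then comes down to one judicious choice of $\ve$ together with two numerical checks.

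The decisive choice is $\ve=1/\log X$, which is permissible because $X\ge z_1\ge100$ makes $\log X>0$. Then $1/\ve=\log X$, so the term $A(\tau+1)/\ve$ arising from Theorem~\ref{main1} becomes exactly $A(\tau+1)\log X$ and supplies the announced factor $\log X$. The analytic prefactor collapses to $X^{\ve}e^{\gamma\ve}=e^{1+\gamma/\log X}$, which is decreasing in $X$; bounding it at the extreme point $X=100$ gives $e^{1+\gamma/\log100}\le3.09$, and this is precisely where the constant $3.09$ originates.

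For the remaining bracket I would first observe that $z_1^{\ve},z_2^{\ve}\ge1$ forces each of $B-C/z_1^{\ve}$ and $B-C/z_2^{\ve}$ to be at least $B-C=0.983>0$, so the coefficient multiplying $\log z_1$ is positive and $\log z_1$ may be enlarged to $\log X$ (valid since $z_1\le X$). The numerator then becomes $\log X\bigl[A(\tau+1)+B(1+\tau^2)-C/z_1^{\ve}-C\tau^2/z_2^{\ve}\bigr]$. Because $z_1\le X$ and $\ve=1/\log X$ give $z_1^{\ve}=e^{\log z_1/\log X}\le e$, I obtain $C/z_1^{\ve}\ge C/e>0.116$; discarding the nonnegative term $C\tau^2/z_2^{\ve}$ yields $-C/z_1^{\ve}-C\tau^2/z_2^{\ve}\le-0.116$. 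Assembling the pieces reproduces the claimed inequality, and for $\tau=2$ (so that $\log(z_2/z_1)=\log z_1$) the numerical bracket equals $1.084\cdot3+1.301\cdot5-0.116=9.641$, whence the right-hand side is at most $3.09\cdot9.641\,\log X/\log z_1<30\log X/\log z_1$.

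I do not expect a structural difficulty, since all the analytic content resides in Theorem~\ref{main1} and the corollary is just a truncation plus the single optimization $\ve=1/\log X$. The genuinely delicate point is the numerical bookkeeping: the two bounds $e^{1+\gamma/\log100}\le3.09$ and $C/e>0.116$ are essentially tight, so the rounding must be performed in the direction that preserves the inequality, and one must verify that the coefficient of $\log z_1$ is positive in order to justify replacing $\log z_1$ by $\log X$.
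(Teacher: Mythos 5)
Your proposal is correct and follows essentially the same route as the paper: Rankin's trick with $\ve=1/\log X$, bounding the prefactor $X^\ve e^{\gamma\ve}\le e^{1+\gamma/\log 100}\le 3.09$, discarding the $C\tau^2/z_2^\ve$ term, and using $z_1^\ve\le e$ to obtain $-C/e\le -0.116$. Your observation that $3.09\cdot 9.641=29.79<30$ also matches how the $\tau=2$ claim follows (the paper additionally records a marginally sharper constant $29.18$ for $\tau=2$ by retaining the $z_2$ term, but that refinement is not needed for the stated bound).
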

The above estimate not only generalises the result of \cite{Betah*18},
but also saves more than a factor of $5$ on the special case considered
therein, namely when $\tau=2$. Note that a more precise explicit study
of these weights when $X\ge z_2^2$ has been the subject of
\cite{ZAlterman19-0} by the second author.

On the other hand, while studying zeros of the Riemann zeta function,
S. Graham also used the Barban--Vehov weights.  Here is a corollary
that may be compared to \cite[Lemma 9]{Graham*81}.
 \begin{cor}
  \label{main3}
  Let $X\ge z_1\ge 100$ and $z_2=z_1^\tau$, for some $\tau>1$. Then, for
  any $\alpha\in[1/2,1]$, we have
  \begin{equation*}
    \sum_{n\leq X}\frac{1}{n^{2\alpha-1}}\biggl(\sum_{\substack{d|n}}\lambda_d\biggr)^2
    \le
    3.09X^{2-2\alpha}\frac{\log X}{\log(z_2/z_1)}\frac{1.084(\tau+1)+1.301(1+\tau^2)-0.116}
    {\tau-1}.
  \end{equation*}
\end{cor}
As the reader will see, the proofs of the above results are remarkably simple, a fact that is
reflected thanks to the very moderate constants that appear throughout our bounds. It is noteworthy that, giving our historical remarks, it has taken quite a while to reach the reasonable bounds we present in Theorem \ref{main1}, Corollary \ref{main2} and Corollary \ref{main3}.


\noindent\textbf{Notation.}
We shall use throughout the notation $\sigma=1+\ve$ --- we will use both notations interchangeably. Further, and in accordance with
earlier work on the subject, we use the family of functions
\begin{equation}
  \label{defcheckmqdeD}
  \check{m}_q(X,\sigma)=\sum_{\substack{n\le X\\
      (n,q)=1}}\frac{\mu(n)}{n^\sigma}
  \log\biggl
  (\frac{X}{n}\biggr).
\end{equation}
We shall also require the {\em Euler
  $\varphi_s$-function} defined, for any complex number $s$, by
  $\varphi_s(q)= q^s\prod_{p|q}\left(1-{1}/{p^s}\right)$. Finally, the variable $p$ always corresponds to a prime number.

\section{Auxiliary results}

The following result may be found in \cite[Lemma 5.1]{Ramare*13d}.
\begin{lem}
  \label{triv2} Let $\ve>0$. Then $\zeta(1+\ve)\le e^{\gamma \ve}/\ve$.
\end{lem}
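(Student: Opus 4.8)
The plan is to prove the equivalent statement $\ve\,\zeta(1+\ve)\le e^{\gamma\ve}$ by isolating the two leading terms of $\ve\,\zeta(1+\ve)$ \emph{exactly} and comparing against the truncated Taylor bound $e^{\gamma\ve}\ge 1+\gamma\ve+\tfrac12\gamma^2\ve^2$ (valid since $\gamma\ve\ge0$). First I would apply Abel summation to $\zeta(1+\ve)=\sum_{n\ge1}\tfrac1n\,n^{-\ve}$. Writing $A(x)=\sum_{n\le x}\tfrac1n$ and using $n^{-\ve}-(n+1)^{-\ve}=\ve\int_n^{n+1}t^{-1-\ve}\,dt$ (the boundary terms vanish because $A(n)\,n^{-\ve}\to0$), one gets the exact identity
\[
\ve\,\zeta(1+\ve)=\ve^2\int_1^\infty A(\lfloor t\rfloor)\,t^{-1-\ve}\,dt.
\]

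Next I would subtract the smooth comparison function $\log t+\gamma$. Since $\int_1^\infty t^{-1-\ve}\,dt=1/\ve$ and $\int_1^\infty(\log t)\,t^{-1-\ve}\,dt=1/\ve^2$ (differentiate the former in $\ve$), the main terms integrate to $\ve^2(1/\ve^2+\gamma/\ve)=1+\gamma\ve$ exactly, leaving
\[
\ve\,\zeta(1+\ve)=1+\gamma\ve+\ve^2 J(\ve),\qquad J(\ve):=\int_1^\infty\bigl(A(\lfloor t\rfloor)-\log t-\gamma\bigr)\,t^{-1-\ve}\,dt.
\]
Comparing with $e^{\gamma\ve}\ge 1+\gamma\ve+\tfrac12\gamma^2\ve^2$, the entire lemma reduces to the single elementary-looking inequality
\[
J(\ve)\le \tfrac12\gamma^2\qquad(\ve>0).
\]

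The hard part is precisely this last inequality, and it is genuinely delicate because the statement is sharp: $J(0^+)$ equals minus the first Stieltjes constant $\approx0.0728$, while $\tfrac12\gamma^2\approx0.166$, so only a modest margin is available. The integrand $R(t)=A(\lfloor t\rfloor)-\log t-\gamma$ changes sign on every interval $[N,N+1)$, where, using $A(N)=\log N+\gamma+\int_N^\infty\{u\}u^{-2}\,du$, it takes the clean form $R(t)=c_N-\log(t/N)$ with $c_N=\int_N^\infty\{u\}u^{-2}\,du>0$. Thus $R$ is positive at $t=N$ (of size $\asymp 1/(2N)$) and negative as $t\to(N+1)^-$. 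A crude bound such as $R(t)\le 1/(2\lfloor t\rfloor)$ integrates to roughly $1/2$, overshooting $\tfrac12\gamma^2$; so any successful argument must exploit the per-interval cancellation. The route I would pursue is to group the integral over each $[N,N+1)$ and write the contribution as $c_N\int_N^{N+1}t^{-1-\ve}\,dt-\int_N^{N+1}\log(t/N)\,t^{-1-\ve}\,dt$, bounding $c_N$ above and the subtracted (positive) term below using $\log(1+1/N)\ge \tfrac1N-\tfrac1{2N^2}$, then summing the resulting series and checking it stays under $\tfrac12\gamma^2$ uniformly in $\ve$.

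Finally, as a cross-check on the plan I would note that $(\star)$ is equivalent to the concavity of $\ve\mapsto\log\bigl(\ve\,\zeta(1+\ve)\bigr)$ on $(0,\infty)$ (indeed, its second derivative is $\sum_{n\ge1}\Lambda(n)(\log n)\,n^{-1-\ve}-\ve^{-2}$), from which the bound follows because a concave function with value $0$ and slope $\gamma$ at $0^+$ lies below its tangent line $\gamma\ve$. This reformulation offers an alternative attack via explicit Chebyshev/Mertens-type estimates for $\sum_{n\le x}\Lambda(n)(\log n)/n$, which I would keep in reserve should the direct estimation of $J(\ve)$ prove too lossy near the critical range of $\ve$.
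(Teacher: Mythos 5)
You are not competing with an in-paper argument here: the paper does not prove this lemma at all, but quotes it from the cited reference (Lemma~5.1 of Ramar\'e), so your attempt must stand entirely on its own. Its first half does stand: the Abel-summation identity $\ve\zeta(1+\ve)=\ve^2\int_1^\infty A(\lfloor t\rfloor)t^{-1-\ve}\,dt$, the exact evaluation of the main terms, the decomposition $\ve\zeta(1+\ve)=1+\gamma\ve+\ve^2J(\ve)$, the bound $e^{\gamma\ve}\ge 1+\gamma\ve+\tfrac12\gamma^2\ve^2$, and the identifications $J(0^+)=-\gamma_1$ and $R(t)=c_N-\log(t/N)$ with $c_N=\int_N^\infty\{u\}u^{-2}\,du$ are all correct. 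So the lemma would indeed follow from $J(\ve)\le\tfrac12\gamma^2$ for all $\ve>0$, an inequality which is true (numerically $J$ appears maximal at $0^+$, where it equals $-\gamma_1\approx 0.073$ against the budget $0.167$).

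The gap is that this inequality --- which you yourself flag as the hard part --- is never proved, and the scheme you propose for it does not close. Taking your bounds literally (chord bound $\log(t/N)\ge(t-N)\log(1+1/N)$ on $[N,N+1]$, then $\log(1+1/N)\ge\tfrac1N-\tfrac1{2N^2}$), and even granting the \emph{exact} values of $c_N$, the limit $\ve\to0^+$ gives: for $N=1$ the bound $(1-\gamma)\log 2-\tfrac12(1-\log 2)\approx 0.140$; for $N=2$ about $0.022$; for $N=3$ about $0.007$. These three terms alone sum to $\approx 0.169>\tfrac12\gamma^2\approx 0.1666$, before the positive tail $N\ge4$ is added; the damage is done at $N=1$, where replacing $\log 2\approx 0.693$ by $\tfrac12$, and the chord bound itself, are far too lossy. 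Any successful implementation must therefore treat the first few intervals by exact evaluation of the explicit integrals, and must also address uniformity in $\ve$, which is not automatic: the per-interval contributions are not maximal at $\ve=0^+$ (for instance $I_1(\ve)=\int_1^2(c_1-\log t)\,t^{-1-\ve}\,dt$ increases from $\approx 0.053$ at $\ve=0^+$ to $\approx 0.058$ near $\ve=1$--$2$), so checking the sum at $\ve=0^+$ would not suffice. Two smaller points: the symbol $(\star)$ you invoke is never defined; and your fallback claim is overstated, since concavity of $\log(\ve\zeta(1+\ve))$ \emph{implies} the lemma via the tangent line at $0^+$ but is not equivalent to it, and proving $\ve^2\sum_n\Lambda(n)(\log n)n^{-1-\ve}\le 1$ for all $\ve>0$ is at least as hard as the lemma itself. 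In short: a correct and promising reduction, but the decisive inequality is left unproven and the stated method for it fails.
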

On the other hand, we have
\begin{lem}
  \label{mertensaux}
  Suppose that $Y\ge100$. Then
  \begin{equation*}
    \sum_{p\le
    Y}\frac{\log p}{p}
  \log\biggl(\frac{Y}{p}\biggr)\ge
  0.318 \log^2Y.
  \end{equation*}
  The above sum is in fact asymptotic to $\frac{1}{2}\log^2Y$, converging very slowly.
\end{lem}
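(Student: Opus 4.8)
The plan is to reduce the inequality to a monotonicity statement plus a single numerical check at the left endpoint $Y=100$, which is precisely where the constant $0.318$ is calibrated.

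First I would rewrite the sum as an integral. Writing $\log(Y/p)=\int_p^Y dt/t$ for $p\le Y$ and interchanging the finite sum with the integral, one obtains
$$\sum_{p\le Y}\frac{\log p}{p}\log\Bigl(\frac Yp\Bigr)=\int_2^Y\frac{T(t)}{t}\,dt,\qquad T(t)=\sum_{p\le t}\frac{\log p}{p},$$
since $T(t)=0$ for $t<2$. This reformulation is the key step: it converts the problem into controlling the logarithmic average of $T$, for which Mertens-type information is available.

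Next, denote the sum by $S(Y)$. Splitting the integral at $100$ gives $S(Y)=S(100)+\int_{100}^Y T(t)/t\,dt$, while differentiating shows $0.318\log^2 Y=0.318\log^2 100+\int_{100}^Y (0.636\log t)/t\,dt$. Subtracting, it therefore suffices to establish two claims: (A) the base case $S(100)\ge 0.318\log^2 100$, and (B) $T(t)\ge 0.636\log t$ for every $t\ge 100$, since then the remaining integrand is nonnegative. For (B) I would invoke an explicit form of Mertens' theorem, i.e.\ a lower bound of the shape $T(t)\ge\log t-c$ with an explicit constant $c$ (around $1.4$) valid for $t\ge 100$; one then has $T(t)/\log t\ge 1-c/\log t\ge 1-c/\log 100>0.636$, with comfortable room to spare, so (B) is robust.

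The main obstacle is the base case (A). A direct term-by-term evaluation of $\sum_{p\le 100}\frac{\log p}{p}\log(100/p)$ yields approximately $6.778$, whereas the target $0.318\log^2 100$ is about $6.744$; the margin is under one percent, so the finite computation must be carried out to sufficient precision and with all rounding pushed in the safe direction. I expect this tight numerical verification, rather than any structural difficulty, to be the delicate point. Finally, the closing remark that the sum is asymptotic to $\tfrac12\log^2 Y$ follows from the same integral representation together with $T(t)=\log t+O(1)$, which gives $\int_2^Y T(t)/t\,dt=\tfrac12\log^2 Y+O(\log Y)$ and hence the stated slow convergence.
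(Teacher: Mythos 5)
Your proof is correct in structure, and it takes a genuinely different route from the paper. The paper's proof verifies the inequality on $[100,10^8]$ by a Pari-GP computation, and for $Y\ge 10^8$ uses partial summation with respect to the Chebyshev function $\vartheta(t)$, invoking Schoenfeld's explicit bound $\vartheta(t)>0.998697\,t$ (for $t\ge 1155901$) together with a computed bound $\vartheta(t)>0.835\,t$ on $[100,1155901]$, which yields the stronger constant $0.376$ in that range. You instead write the sum as $\int_2^Y T(t)\,dt/t$ with $T(t)=\sum_{p\le t}(\log p)/p$, and reduce the whole statement to (A) a single finite check at $Y=100$ and (B) the pointwise bound $T(t)\ge 0.636\log t$ for $t\ge100$. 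Both of your claims are true: numerically $S(100)=\log(100)\,T(100)-\sum_{p\le100}(\log^2 p)/p\approx 4.6052\cdot 3.3695-8.7381\approx 6.779>6.744\approx 0.318\log^2 100$ (this also explains the provenance of the constant: $S(100)/\log^2 100\approx 0.3197$), and (B) holds with room to spare since explicit Mertens bounds give $T(t)\ge \log t-1.43$, whence $T(t)/\log t\ge 1-1.43/\log 100>0.68>0.636$. Your approach buys a much lighter computation (25 primes instead of a verification up to $10^8$) and a structural explanation of why $0.318$ is the right constant; the paper's approach buys a better constant for large $Y$ and relies only on off-the-shelf bounds for $\vartheta$. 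One caveat on (B): the standard reference (Rosser--Schoenfeld, Theorem 6) states the lower bound $\sum_{p\le x}(\log p)/p>\log x+E-\tfrac{1}{2\log x}$, with $E\approx-1.3326$, only for $x\ge 319$, so as written you would need either a small supplementary check on $[100,319)$ or a trick such as: $T(t)\ge T(100)\ge 3.369\ge 0.636\log t$ already holds for all $t\le e^{3.369/0.636}\approx 199$, and the classical Mertens bound $|T(t)-\log t|<2$ combined with monotonicity of $T$ covers the rest; either patch is a few lines, so this is a citation-precision issue rather than a gap.
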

\begin{proof}
We verify the claimed inequality for $Y\in[100,10^8]$ by using the
  Pari-GP script \texttt{GP/MertensAux.gp}.
  
  Suppose now that $Y\ge10^8$. By summation by parts,  $\sum_{p\le
    Y}\frac{\log p}{p}
  \log(\tfrac{Y}{p})$ equals
  \begin{align}\label{part1}
   &\log Y\sum_{p\leq 100}\frac{\log p}{p}-\sum_{p\leq 100}\frac{\log^2(p)}{p}+\int_{100}^Y(\vartheta(t)-\vartheta(100))\frac{1+\log(Y/t)}{t^2}dt\nonumber\\
  &\qquad\geq3.369\log Y-8.739+\int_{100}^Y(\vartheta(t)-\vartheta(100))\frac{1+\log(Y/t)}{t^2}dt,
  \end{align}
  where $\vartheta(t)=\sum_{p\leq t}\log p$ is the Chebyshev function.
  Now, by \cite[Thm. 6]{Schoenfeld*76}, we have that $
 \vartheta(t)> 0.998697\ t $ for any $t\geq 1155901$.
  On the other hand, one can verify that $\vartheta(t)> 0.835\ t$ provided that $100\leq t\leq 1155901$.
  Hence, 
   \begin{align}\label{part2}
    \int_{100}^Y(\vartheta(t)-\vartheta(100))\frac{1+\log(Y/t)}{t^2}dt&\geq\int_{100}^Y(0.835\ t-\vartheta(100))\frac{1+\log(Y/t)}{t^2}dt\nonumber\\
   &\geq \log^2(Y)\left[\frac{1}{2}+\frac{0.845}{\log Y}-\frac{5.058}{\log^2Y}\right].
  \end{align}
  By combining \eqref{part1} and \eqref{part2}, and then using that $Y\geq 10^8$, we arrive at
 \begin{equation*}
 \sum_{p\le
    Y}\frac{\log p}{p}
  \log\biggl(\frac{Y}{p}\biggr)\geq\log^2(Y)\left[\frac{1}{2}-\frac{13.8}{\log^2(10^8)}\right]\geq 0.376 \log^2Y.
 \end{equation*}
  As $0.376>0.318$, we derive the inequality of the statement.
  
Finally, thanks to the prime number theorem, $\vartheta(t)\sim t$, so we indeed have that $\sum_{p\le
    Y}\frac{\log p}{p}
  \log(\frac{Y}{p})\sim\frac{1}{2}\log^2Y$.
\end{proof}

\section{Some lemmas involving $\check{m}_q(X,\sigma)$}
The following result is crucial for our improvement on \cite{Betah*18}. It can be found in \cite[Theorem 1.1]{Ramare-Zuniga*23-1}.
\begin{lem}
  \label{checkmqdx}
  Let $X>0$. When $k\ge1$ and $\sigma\ge1$, we have
  \begin{equation*}
  0\le \sum_{\substack{n\le X\\ (n,q)=1}}\frac{\mu(n)}{n^{\sigma}}
  \log^k\biggl(\frac{X}{n}\biggr)
  \le
  1.00303\frac{q}{\varphi(q)}\bigl(
  k
  +(\sigma-1) \log X
  \bigr)\log^{k-1}X.
\end{equation*}
When $k\ge2$, we may replace $1.00303$ by 1.\\
\end{lem}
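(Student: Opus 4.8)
The plan is to regard the quantity as a function of the exponent $k$ and to drive everything by a single integral recursion in $k$, so that the whole lemma collapses onto the two smallest cases. Write
\[
S_k(X)=\sum_{\substack{n\le X\\(n,q)=1}}\frac{\mu(n)}{n^{\sigma}}\log^k\Bigl(\frac{X}{n}\Bigr),
\]
so that $S_1(X)=\check{m}_q(X,\sigma)$. Starting from the elementary identity $\log^k(X/n)=k\int_n^X\log^{k-1}(t/n)\,\frac{dt}{t}$, valid for $n\le X$ (substitute $u=\log(t/n)$), I would interchange summation and integration and note that for each fixed $t\in[1,X]$ the inner sum runs over $n\le t$. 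This yields the recursion
\[
S_k(X)=k\int_1^X S_{k-1}(t)\,\frac{dt}{t}\qquad(k\ge1),
\]
which stays inside the same family, since $S_{k-1}(t)$ is the identical sum with $X$ replaced by $t$. This recursion is the engine of the whole argument.

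The lower bound is then immediate by induction: integration against $\frac{dt}{t}$ preserves nonnegativity, so $S_k(X)\ge0$ for all $k\ge1$ the moment one knows $S_1(t)=\check{m}_q(t,\sigma)\ge0$ for every $t$. Thus the positivity half of the statement reduces entirely to the single assertion $\check{m}_q(X,\sigma)\ge0$, the case $k=1$.

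For the upper bound I would again induct on $k$ using the recursion. Assuming the claimed shape $S_{k-1}(t)\le\frac{q}{\varphi(q)}\bigl(k-1+(\sigma-1)\log t\bigr)\log^{k-2}t$, the two integrals $\int_1^X\log^{k-2}t\,\frac{dt}{t}=\frac{\log^{k-1}X}{k-1}$ and $\int_1^X\log^{k-1}t\,\frac{dt}{t}=\frac{\log^{k}X}{k}$ give
\[
S_k(X)\le k\frac{q}{\varphi(q)}\Bigl[(k-1)\frac{\log^{k-1}X}{k-1}+(\sigma-1)\frac{\log^{k}X}{k}\Bigr]
=\frac{q}{\varphi(q)}\bigl(k+(\sigma-1)\log X\bigr)\log^{k-1}X,
\]
so the inductive step reproduces the exact shape and, crucially, leaves the constant unchanged. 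Consequently the constant appearing for every $k$ is simply the constant of the base case from which one launches the induction. This explains the dichotomy in the statement: since no clean predecessor $S_0$ of the right shape is available, the case $k=1$ must be treated directly and is the one that forces the value $1.00303$; whereas starting the induction afresh at $k=2$ with a direct estimate carrying constant $1$ propagates constant $1$ to all $k\ge2$.

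The main obstacle is therefore concentrated entirely in the two base cases $k=1$ and $k=2$, that is, in the sharp analytic control of $\check{m}_q(X,\sigma)$ and of its $\log^2$-analogue, together with the positivity $\check{m}_q(X,\sigma)\ge0$. The factor $q/\varphi(q)$ enters already here, as the Euler correction produced by the coprimality restriction $(n,q)=1$, and is untouched by the integration. I expect the constant $1.00303$ to arise as a $\zeta$- or Mertens-type extremal quantity, bounded in the spirit of Lemmas \ref{triv2} and \ref{mertensaux}: explicit verification over a bounded range combined with an analytic estimate of the tail. This delicate base-case analysis is exactly the content imported from \cite[Theorem 1.1]{Ramare-Zuniga*23-1}, after which the recursion above makes the passage to general $k$ routine.
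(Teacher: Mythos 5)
A preliminary but essential observation: the paper contains no proof of this lemma at all --- it is imported verbatim from \cite[Theorem 1.1]{Ramare-Zuniga*23-1} --- so your attempt can only be compared against that citation, not against an in-paper argument. Your reduction itself is correct: the identity $\log^k(X/n)=k\int_n^X\log^{k-1}(t/n)\,dt/t$ and the interchange of the (finite) sum with the integral do give
\begin{equation*}
S_k(X)=k\int_1^X S_{k-1}(t)\,\frac{dt}{t}\qquad(k\ge1),
\end{equation*}
the evaluations $\int_1^X\log^{j}t\,\frac{dt}{t}=\log^{j+1}(X)/(j+1)$ reproduce exactly the claimed shape with the constant unchanged, and your diagnosis of the dichotomy is accurate on both counts: the recursion launched at $k=1$ can only propagate the constant $1.00303$, so the constant $1$ for $k\ge2$ must come from a fresh direct base case at $k=2$; and the lower bound cannot be pushed below $k=1$, since the predecessor $S_0(t)=\sum_{n\le t,\,(n,q)=1}\mu(n)n^{-\sigma}$ changes sign, so the positivity of $\check{m}_q(t,\sigma)$ for all $t$ must be known independently. (One pedantic point: your recursion, like the statement itself, really requires $X\ge1$; for $0<X<1$ the sum is empty.) That said, be clear about what your argument establishes: the entire analytic substance of the lemma --- the positivity at $k=1$, the value $1.00303$, and the factor $q/\varphi(q)$ --- is concentrated in the base cases $k=1$ and $k=2$, which you defer to the very same reference. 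So you have a correct proof that the general case follows from the cases $k\in\{1,2\}$ of the same statement, not an independent proof of the lemma. In fairness, this is no less complete than the paper's own treatment, which cites the full statement for every $k$, and your recursion is a genuine structural addition that explains \emph{why} the constant does not degrade as $k$ grows.
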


We also have the following result.


\begin{lem}
  \label{BVRam}
  Let $Y\ge 100$. Then
  \begin{equation*}
    U(Y)=\sum_{\delta\le Y}
    \frac{\mu^2(\delta)  \varphi_{\sigma}(\delta)}{\delta^{2\sigma}}
    \check{m}_{\delta}\Bigl(\frac{Y}{\delta};\sigma\Bigr)^2
    \le
    \left(A+(\sigma-1)\left(B-\tfrac{C}{Y^\ve}\log Y\right)\right)\log Y.
  \end{equation*}
where $A=1.084$, $B=1.301$, $C=0.318$.
\end{lem}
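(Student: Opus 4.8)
The plan is to expand the square, separate the diagonal-like main term from the off-diagonal correction, and control each using Lemma \ref{checkmqdx} together with Lemma \ref{mertensaux}. First I would write $U(Y)$ as
\begin{equation*}
U(Y)=\sum_{\delta\le Y}\frac{\mu^2(\delta)\varphi_\sigma(\delta)}{\delta^{2\sigma}}\Biggl(\sum_{\substack{n\le Y/\delta\\(n,\delta)=1}}\frac{\mu(n)}{n^\sigma}\log\Bigl(\frac{Y/\delta}{n}\Bigr)\Biggr)^2,
\end{equation*}
and then bound the inner factor $\check{m}_\delta(Y/\delta;\sigma)$ by Lemma \ref{checkmqdx} with $k=1$, which gives $0\le\check{m}_\delta(Y/\delta;\sigma)\le 1.00303\,\tfrac{\delta}{\varphi(\delta)}\bigl(1+(\sigma-1)\log(Y/\delta)\bigr)$. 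Because one factor is nonnegative and bounded this way, the square becomes $\check{m}_\delta(Y/\delta;\sigma)^2\le 1.00303\,\tfrac{\delta}{\varphi(\delta)}\bigl(1+(\sigma-1)\log(Y/\delta)\bigr)\check{m}_\delta(Y/\delta;\sigma)$, converting the hard quadratic into a linear object.

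Next I would expand $\check{m}_\delta(Y/\delta;\sigma)=\sum_{(n,\delta)=1}\mu(n)n^{-\sigma}\log((Y/\delta)/n)$ and interchange the order of summation over $\delta$ and $n$. The key simplification is that, writing $m=\delta n$ with $\delta,n$ coprime and squarefree, the weight $\mu^2(\delta)\tfrac{\varphi_\sigma(\delta)}{\delta^{2\sigma}}\cdot\tfrac{\delta}{\varphi(\delta)}\cdot\tfrac{\mu(n)}{n^\sigma}$ should telescope through the multiplicativity of $\varphi_\sigma$ and $\varphi$ into something that, after summing over the factorisations of $m$, collapses to a clean multiplicative function of $m$. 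The constant term $A\log Y$ is expected to arise from the $1$ in $\bigl(1+(\sigma-1)\log(Y/\delta)\bigr)$, while the $(\sigma-1)$-term produces the bracketed correction.

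For the $B-\tfrac{C}{Y^\ve}\log Y$ part, I would track the factor $\log(Y/\delta)=\log Y-\log\delta$ coming from the $(\sigma-1)$ contribution. The positive constant $B$ should emerge from the $\log Y$ piece after summing the resulting Euler product, and the crucial negative term $-\tfrac{C}{Y^\ve}\log Y$ must come from the $-\log\delta$ piece; here the coprimality-restricted sum over $\delta$ weighted by $\log\delta$ naturally produces a sum of the shape $\sum_{p}\tfrac{\log p}{p}\log(Y/p)$, which is exactly where Lemma \ref{mertensaux} supplies the lower bound $0.318\log^2Y=C\log^2 Y$ needed to obtain $C=0.318$. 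The factor $Y^{-\ve}=Y^{-(\sigma-1)}$ tracks the size of $z_1^{-\ve}$-type quantities through the $n^{-\sigma}$ weighting.

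The main obstacle I expect is the bookkeeping in the double-sum interchange: ensuring the coprimality condition $(n,\delta)=1$ is handled correctly when recombining $\delta$ and $n$, and confirming that the arithmetic factors $\varphi_\sigma(\delta)/\delta^{2\sigma}$, $\delta/\varphi(\delta)$ and $\mu(n)/n^\sigma$ multiply to a genuinely multiplicative function whose Euler product converges and evaluates to the stated constants $A$ and $B$. The nonnegativity in Lemma \ref{checkmqdx} is what makes the linearisation of the square legitimate, so I would verify that inequality direction carefully before committing to the rest. Once the multiplicative structure is pinned down, the remaining estimates are routine Euler-product bounds combined with the single application of Lemma \ref{mertensaux}.
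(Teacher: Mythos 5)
Your proposal is correct and follows essentially the same route as the paper: the same linearisation of the square using nonnegativity and the $k=1$ case of Lemma~\ref{checkmqdx}, the same reduction of the double sum to a small Euler product (your convolution over $m=\delta n$ produces exactly the factor $1+\tfrac{p^{-\ve}(1-p^{-\ve})}{p(p-1)}$ that the paper estimates), and the same prime-supported negative term handled via $p^{-\sigma}\ge Y^{-\ve}p^{-1}$ and Lemma~\ref{mertensaux}. The only difference is bookkeeping: the paper makes your hoped-for ``telescoping'' explicit through the auxiliary multiplicative function $f$ of identity~\eqref{iden}, splitting $\delta=ab$ and then collapsing the combined variable $d=bc$ by M\"obius inversion and the von Mangoldt identity $\sum_{bc=d}\mu(c)\log b=\Lambda(d)$, whereas you merge $\delta$ and the M\"obius variable directly into one multiplicative function of $m$.
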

  
\begin{proof}
  Each $\check{m}_\delta(y/\delta;\sigma)$ is non-negative, thanks to
  Lemma~\ref{checkmqdx} with $k=1$. Thus, we have the following estimation
  \begin{equation}\label{est}
    U(Y)
    \le
    1.00303\sum_{\delta\le
      Y}\frac{\mu^2(\delta)\varphi_{\sigma}(\delta)}{\delta^{2\sigma}}
      \frac{\delta}{\varphi(\delta)}
      \check{m}_{\delta}\Bigl(\frac{Y}{\delta};\sigma\Bigr)
    \biggl(1+\ve\log\bigg(\frac{Y}{\delta}\bigg)\biggr).
  \end{equation}
  Let us define the auxiliary non-negative multiplicative function $f$ on primes $p$ by
  $f(p)=\frac{1-p^{-\sigma}}{1-1/p}>0$ and on prime powers by
  $f(p^k)=0$ for $k\in\mathbb{Z}_{>1}$. We observe that  
    \begin{equation}\label{iden}
    \frac{\mu^2(\delta)\varphi_{\sigma}(\delta)}{\delta^{\sigma}}
    \frac{\delta}{\varphi(\delta)}
    =\mu^2(\delta)\sum_{a|\delta}f(a).
  \end{equation}
  On using identity \eqref{iden} on estimation \eqref{est}, and recalling definition \eqref{defcheckmqdeD}
, we derive
  \begin{align*}
    U(Y)
    &\le
    1.00303\sum_{a\le Y}\mu^2(a)\frac{f(a)}{a^{\sigma}}
      \sum_{\substack{b\le Y/a\\ (b,a)=1}}\frac{\mu^2(b)}{b^{\sigma}}
      \check{m}_{ab}\Bigl(\frac{Y}{ab};\sigma\Bigr)
    \biggl(1+\ve\log\bigg(\frac{Y}{ab}\bigg)\biggr)
    \\&\le
    1.00303
    \sum_{a\le Y}\mu^2(a)\frac{f(a)}{a^{\sigma}}
    \biggl(1+\ve\log\bigg(\frac{Y}{a}\bigg)\biggr)
    \sum_{\substack{d\le Y/a\\(d,a)=1}}
    \frac{\mu^2(d)}{d^{\sigma}}\log\biggl(\frac{Y/a}{d}\biggr)
    \sum_{bc=d}
    \mu(c)
    \\&\qquad
    -1.00303\ve
    \sum_{a\le Y}\mu^2(a)\frac{f(a)}{a^{\sigma}}
    \sum_{\substack{d\le Y/a\\ (d,a)=1}}
    \frac{\mu^2(d)}{d^{\sigma}}\log\biggl(\frac{Y/a}{d}\biggr)
    \sum_{bc=d}\mu(c)\log b.
  \end{align*}
  By M\"obius inversion, the innermost sum of the above first term reduces to $d=1$. Moreover, as $\sum_{bc=d}\mu(c)\log(b)=\Lambda(d)$, where $\Lambda$ is the von Mangoldt function, the innermost sum of the above second term is supported on prime numbers. Hence, 
  \begin{align*}
    \frac{U(Y)}{1.00303}
    \le&
      \sum_{a\le Y}\mu^2(a)\frac{f(a)}{a^{\sigma}}
    \biggl(1+\ve\log \bigg(\frac{Y}{a}\bigg)\biggr)\log\bigg(\frac{Y}{a}\bigg)
  \\
  &\qquad - \ve\,
    \sum_{a\le Y}\mu^2(a)\frac{f(a)}{a^{\sigma}}
      \sum_{\substack{p\le Y/a\\ (p,a)=1}} \frac{\log
    p}{p^{\sigma}}\log\bigg(\frac{Y/a}{p}\bigg).
  \end{align*}
  In the above inequality, we consider the two terms coming from $a=1$. Then, by using the inequality $p^{-\sigma}\geq y^{-\ve}p^{-1}$, valid whenever $p\leq y$, we derive
    \begin{align*}
    \frac{U(Y)}{1.00303}
    \le&
      (1+\ve\log Y)\log Y+\sum_{2\le a\le
    Y}\mu^2(a)\frac{f(a)}{a^{\sigma}}
    \biggl(1+\ve\log \bigg(\frac{Y}{a}\bigg)\biggr)\log\bigg(\frac{Y}{a}\bigg)
  \\
  &\qquad -
   \ve\,
          \sum_{\substack{p\le Y}} \frac{\log
    p}{p^{\sigma}}\log\bigg(\frac{Y}{p}\bigg)
    - \ve\,
    \sum_{2\le a\le
      Y}\mu^2(a)\frac{f(a)}{a^{\sigma}}
      \sum_{\substack{p\le Y/a\\ (p,a)=1}} \frac{\log
    p}{p^{\sigma}}\log\bigg(\frac{Y/a}{p}\bigg).
  \end{align*}
  Next, by trivially bounding the above last term and using Lemma~\ref{mertensaux}, we have
  \begin{align*}
    \frac{U(Y)}{1.00303}
    &\le
    (1+\ve\log Y)\log Y
    - \frac{0.318\ve}{Y^\ve}\log^2 Y
    \\&\quad+
    \biggl[-1+\prod_{p\ge2}\biggl(1+\frac{p^{-\ve}(1-p^{-\ve})}{p(p-1)}\biggr)\biggr]
    \biggl(1+\ve\log\left(\frac{Y}{2}\right)\biggr)\log\left(\frac{Y}{2}\right).
  \end{align*}
  Furthermore, by using that $t\in\mathbb{R}_{>0}\mapsto t^{-1}(1-t^{-1})$ has a maximum $1/4$, we deduce that
    \begin{align*}
    &-1+\prod_{p\ge2}\biggl(1+\frac{p^{-\ve}(1-p^{-\ve})}{p(p-1)}\biggr)
    \le -1+\biggl(1+\frac{2^{-\ve}(1-2^{-\ve})}{2}\biggr)\prod_{p\ge3}\biggl(1+\frac{1}{4p(p-1)}\biggr)\\
    &\qquad\le -1+\biggl(1+\frac{2^{-\ve}(1-2^{-\ve})}{2}\biggr)1.08
    \le 0.08
    +0.54\cdot 2^{-\ve}(1-2^{-\ve})
    ,
  \end{align*}
  so that
  \begin{align*}
    \bigl(0.08
    &+0.54\cdot 2^{-\ve}(1-2^{-\ve})\bigr)(1+\ve\log Y)
    \\&\le
    0.08 + \ve
    \biggl(0.08+0.54\cdot
    2^{-\ve}(1-2^{-\ve})+0.54\frac{2^{-\ve}(1-2^{-\ve})}{\ve\log
      Y}\biggr)\log Y\\
    &\le 0.08+0.297\, \ve\log Y,
  \end{align*}
  where we have used that $t\in\mathbb{R}_{\geq 0}\mapsto (1-2^{-t})/t$ is decreasing and has as maximum $\log 2$ at $t=0$. 
  We conclude the result by observing that
  \begin{align*}
    \frac{U(Y)}{1.00303}
    &\le
    (1+\ve\log Y)\log Y
    - \frac{0.318\ve}{Y^\ve}\log^2Y+
    \biggl(0.08+0.297\ve\log Y\biggr)\log\left(\frac{Y}{2}\right)\\
    &\le 1.08\log Y+\ve\log^2Y\bigg(1.297-\tfrac{0.318}{Y^\ve}\bigg).
  \end{align*}
\end{proof}

\section{Main result}

\begin{proofbold}{Theorem~\ref{main1}}
Recall definition \eqref{delBVweights}. Let us define
\begin{equation}
  \label{defSigma}
    \Sigma(\sigma)=\Sigma(\sigma,z_2,z_1)=\sum_{n}\frac{1}{n^\sigma}
    \Bigl(\sum_{\substack{d|n}}\lambda_d\Bigr)^2.
  \end{equation}
By expanding the square, we observe that
\begin{equation}\label{eq_0}
   \Sigma(\sigma)
  =
  \sum_{d,e}\lambda_{d}\lambda_{e}
  \sum_{[d,e]|n}\frac{1}{n^{\sigma}}
  =
  \zeta(\sigma)
  \sum_{d,e}
  \frac{\lambda_{d}\lambda_{e}}{[d,e]^{\sigma}},
\end{equation}
where the variables $d$ and $e$ may be restricted to $d,e\le z_2$.

Let us use now the Selberg diagonalisation process. For any pair of square-free numbers $d,e$, we have
\begin{equation}
  \label{eq:12}
  \frac{d^{\sigma}e^{\sigma}}{[d,e]^{\sigma}}
  =
  (d,e)^{\sigma}
  =\sum_{\delta|(d,e)}\varphi_{\sigma}(\delta).
\end{equation}
As the support of $\lambda$ is contained in the set of square-free
numbers, we may derive from \eqref{eq:12} the following expression 
\begin{equation}
  \label{eq:13}
  \sum_{d,e}
  \frac{\lambda_{d}\lambda_{e}}{[d,e]^{\sigma}}
  =
  \sum_{\delta}
  \frac{\varphi_{\sigma}(\delta)}{\delta^{2\sigma}}
  \biggl(
  \sum_{\substack{\ell\\(\ell,\delta)=1}}
  \frac{\lambda_{\delta\ell}}{\ell^{\sigma}}
  \biggr)^2.
\end{equation}
Thus, by definition \eqref{delBVweights} and equation \eqref{eq_0}, we
obtain
  \begin{equation}
    \label{goodstep}
   \Sigma(\sigma)
  \le
  \frac{\zeta(\sigma)}{\log^2(z_2/z_1)}
  \sum_{\delta\le z_2}\frac{\mu^2(\delta)
    \varphi_{\sigma}(\delta)}{\delta^{2\sigma}}
  \biggl[
  \check{m}_{\delta}\Bigl(\frac{z_2}{\delta};\sigma\Bigr)
  -\check{m}_{\delta}\Bigl(\frac{z_1}{\delta};\sigma\Bigr)\biggr]^2,
\end{equation}
where we have used the definition~\eqref{defcheckmqdeD}.
Furthermore, as $\check{m}_{q}(X;\sigma)\ge0$, thanks to Lemma \ref{checkmqdx}, we find that
\begin{equation*}
  \bigg[\check{m}_{\delta}\bigg(\frac{z_2}{\delta};\sigma\bigg)
  -\check{m}_{\delta}\bigg(\frac{z_1}{\delta};\sigma\bigg)\bigg]^2
  \le \check{m}^2_{\delta}\bigg(\frac{z_2}{\delta};\sigma\bigg)
  +\check{m}^2_{\delta}\bigg(\frac{z_1}{\delta};\sigma\bigg)
\end{equation*}
with equality if and only if
$\check{m}_{\delta}({z_2}/{\delta};\sigma)\check{m}_{\delta}({z_1}/{\delta};\sigma)=0$.
Therefore, we see from \eqref{goodstep} that
\begin{equation*}
  \frac{\log^2(z_2/z_1)}{\zeta(\sigma)}\Sigma(\sigma)
  \le
  \sum_{\delta\le z_2}
  \frac{\mu^2(\delta)  \varphi_{\sigma}(\delta)}{\delta^{2\sigma}}
  \check{m}^2_{\delta}\Bigl(\frac{z_2}{\delta};\sigma\Bigr)
  +
  \sum_{\delta\le z_1}
  \frac{\mu^2(\delta)  \varphi_{\sigma}(\delta)}{\delta^{2\sigma}}
  \check{m}^2_{\delta}\Bigl(\frac{z_1}{\delta};\sigma\Bigr),
\end{equation*}
so that
\begin{equation*}
\Sigma(\sigma)\leq\frac{\zeta(\sigma)}{\log^2(z_2/z_1)}(U(z_2)+U(z_1))=\frac{\zeta(\sigma)}{\log(z_2/z_1)}\frac{U(z_2)+U(z_1)}{(\tau-1)\log z_1}.
\end{equation*}
Theorem~\ref{main1} follows by applying Lemma~\ref{BVRam} twice, recalling that $z_2=z_1^\tau$ and using
then Lemma~\ref{triv2}.

\end{proofbold}
\newline

With the help of \eqref{eq:13}, it is worth noticing that, for any $\ve>0$ and any weight $\lambda$, the sum $\sum_{d,e}
  \frac{\lambda_{d}\lambda_{e}}{[d,e]^{\sigma}}$ is always non-negative.

\section{Corollaries}
\begin{proofbold}{Corollary~\ref{main2}} 
  By using Rankin's trick, we have 
  \begin{equation*}
    \Sigma(0)=\sum_{n\leq X}\frac{1}{n}\biggl(\sum_{\substack{d|n}}\lambda_d\biggr)^2
\le
    \sum_{n}\frac{X^\ve}{n^{1+\ve}}
    \Bigl(\sum_{\substack{d|n}}\lambda_d\Bigr)^2
  \end{equation*}
  for any parameter $\ve$ that we shall soon choose.
  Now, by using Theorem~\ref{main1}, we obtain the bound
    \begin{equation}\label{zero}
    \Sigma(0)\le
    \frac{X^\ve e^{\gamma\ve}}{\ve\log(z_2/z_1)}
    \frac{A(\tau+1)+[B-\tfrac{C }{z_1^{\ve}}+(B-\tfrac{C}{
      z_2^{\ve}}) \tau^2]\ve\log z_1}
    {\tau-1},
  \end{equation}
    where $A=1.084$, $B=1.301$, $C=0.318$.
    
  Set $\ve=1/\log X$. Since there is no further assumptions on $z_2$, we use the bound $-C
      z_2^{-\ve}\le 0$. Thus, by writing $u=\ve\log z_1$, we obtain
      \begin{align*}
    \Sigma(0)&\le
    \frac{ e^{1+\gamma/\log X}\log X}{\log(z_2/z_1)}
    \frac{A(\tau+1)+[B(1+\tau^2)-\tfrac{C }{e^u}]u}
    {\tau-1}\\
    &\le
    \frac{ e^{1+\gamma/\log 100}\log X}{\log(z_2/z_1)}
    \frac{A(\tau+1)+B(1+\tau^2)-\tfrac{C }{e}}
    {\tau-1}
  \end{align*}
  where we have used that $X\geq 100$ and $0<u\leq 1$.
    
  In the specific and common situation
  when $\tau=2$, unlike the general case, we may take advantage of the factor $C
      z_2^{-\ve}=Ce^{-2u}$ appearing in \eqref{zero}. Hence, by using again that $X\geq 100$ and $0<u\leq 1$, we finally obtain
       \begin{equation*}
    \Sigma(0,z_1^2,z_1)\le
    e^{1+\gamma/\log 100}[A(\tau+1)+B(1+\tau^2)-\tfrac{C }{e}-\tfrac{C\tau^2}{e}]\frac{\log X}{\log(z_2/z_1)}\leq \frac{29.18\log X}{\log(z_2/z_1)}.
  \end{equation*}
         \end{proofbold}
\newline

\begin{proofbold}{Corollary~\ref{main3}}
  We simply write
  \begin{align*}
    \sum_{n\le X}\frac{1}{n^{2\alpha-1}}\biggl(\sum_{\substack{d|n}}\lambda_d\biggr)^2
    &=
      \sum_{n\le
      X}\frac{(\sum_{\substack{d|n}}\lambda_d)^2}{n}n^{2-2\alpha}\le
      X^{2-2\alpha}\sum_{n\le
      X}\frac{1}{n}\bigg(\sum_{\substack{d|n}}\lambda_d\bigg)^2,
  \end{align*}
  since $2-2\alpha\ge0$. Then the result follows by using Corollary~\ref{main2}.
\end{proofbold}

\bibliographystyle{plain}
\bibliography{Local}

\begin{thebibliography}{1}

\bibitem{Barban-Vehov*68}
M.B. Barban and P.P. Vehov.
\newblock An extremal problem.
\newblock {\em Trudy Moskov. Mat.}, 18:83--90, 1968.

\bibitem{Graham*78}
{S.W.} Graham.
\newblock An asymptotic estimate related to {S}elberg's sieve.
\newblock {\em J. Number Theory}, 10:83--94, 1978.

\bibitem{Graham*81}
S.W. Graham.
\newblock The distribution of squarefree numbers.
\newblock {\em J. London Math. Soc. (2)}, 24(1):54--64, 1981.

\bibitem{Betah*18}
M.~Haye~Betah.
\newblock Explicit expression of a {B}arban \& {V}ehov theorem.
\newblock {\em Funct. Approx. Comment. Math.}, 60(2):177--193, 2019.

\bibitem{Motohashi*83}
Y.~Motohashi.
\newblock Sieve {M}ethods and {P}rime {N}umber {T}heory.
\newblock {\em Tata Lectures Notes}, page 205, 1983.

\bibitem{Ramare*13d}
O.~{Ramar\'e}.
\newblock An explicit density estimate for {D}irichlet ${L}$-series.
\newblock {\em Math. Comp.}, 85(297):335--356, 2016.

\bibitem{Ramare-Zuniga*23-1}
O.~Ramar\'e and S.~Zuniga-Alterman.
\newblock M\"obius function: an identity factory with applications.
\newblock {\em arXiv:2312.05138}, 2023.

\bibitem{Schoenfeld*76}
L.~{Schoenfeld}.
\newblock Sharper bounds for the {C}hebyshev {F}unctions $\vartheta(x)$ and
  $\psi(x)$ ii.
\newblock {\em Math. Comp.}, 30(134):337--360, 1976.

\bibitem{ZAlterman19-0}
S~Zuniga-Alterman.
\newblock {\em Smoothing and cancellation: the Barban-Vehov sieve made
  explicit}.
\newblock PhD thesis, Mathematics, Universit\'e Paris-Diderot, 2020.
\newblock 81 pp.

\end{thebibliography}


\Addresses

\end{document}